\DeclarePairedDelimiter{\ceil}{\lceil}{\rceil}
\DeclarePairedDelimiter{\floor}{\lfloor}{\rfloor}
\def\ps@pprintTitle{%
 \let\@oddhead\@empty
\let\@evenhead\@empty
\def\@oddfoot{\reset@font\hfil\thepage\hfil}
\let\@evenfoot\@oddfoot

}
\newtheorem*{th26}{Theorem 2.6}
\newtheorem*{th31}{Theorem 3.1}
\newtheorem*{th35}{Theorem 3.5}
\newtheorem*{cr32}{Corollary 3.2}
\newtheorem*{cr33}{Corollary 3.3}
\newtheorem*{th25}{Theorem 2.5}
\newtheorem*{p1}{Problem 1}
\newtheorem*{p2}{Problem 2}
\newtheorem{theorem}{Theorem}[section]
\newtheorem{lemma}[theorem]{Lemma}
\newtheorem{corollary}[theorem]{Corollary}
\newtheorem{prop}[theorem]{Proposition}
\newtheorem{con}{Conjecture }
\numberwithin{equation}{theorem}
\theoremstyle{definition}
\DeclareMathOperator{\e}{\mathit{exp}}
\journal{}
\begin{document}

\begin{frontmatter}

\title{Bounding the exponent of a finite group by the exponent of the automorphism group and a theorem of Schur}

\author[IISER TVM]{P. Komma}
\ead{patalik16@iisertvm.ac.in}
\author[IISER TVM]{V.Z. Thomas\corref{cor1}}
\address[IISER TVM]{School of Mathematics,  Indian Institute of Science Education and Research Thiruvananthapuram,\\695551
Kerala, India.}
\ead{vthomas@iisertvm.ac.in}
\cortext[cor1]{Corresponding author. \emph{Phone number}: +91 8921458330}

\begin{abstract}
Assume $G$ is a finite $p$-group, and let $S$ be a Sylow $p$-subgroup of $\operatorname{Aut}(G)$ with $\exp(S)=q$. We prove that if $G$ is of class $c$, then $\exp(G)|p^{\ceil{\log_pc}}q^3$, and if $G$ is a metabelian $p$-group of class at most $2p-1$, then $\exp(G)|pq^3$. 
\end{abstract}

\begin{keyword}
 Schur Multiplier  \sep automorphisms of groups
\MSC[2010]   20B05  \sep 20D15  \sep 20F14 \sep 20F18 \sep 20G10 \sep 20J05 \sep 20J06 
\end{keyword}

\end{frontmatter}

 \section{Introduction} 

 The study of the relationship between $|G|$ and $|\operatorname{Aut}(G)|$ has attracted many researchers (see \cite{BH}, \cite{JEA} and \cite{PSY}). In \cite[Theorem 1]{EKS}, the authors bound the exponent of a finite group $G$ with automorphisms. Motivated by this, we propose the following problem:

 \begin{p1}
Let $G$ be a finite group. Can we find a function $f_1:\mathbb{N}\rightarrow \mathbb{N}$ such that $\exp(G)\mid f_1(\exp(\operatorname{Aut}(G)))$, and can we describe $f_1$ explicitly for certain classes of groups?
\end{p1}

 A classical theorem of Schur \cite{IS} states that if the central quotient $G/Z(G)$ is finite, then $\gamma_2(G)$ is finite. In \cite[Theorem 1]{AM1}, the author proves that if $G$ is a group in which $G/Z(G)$ is locally finite of exponent $n$, then $\gamma_2(G)$ is locally finite, and has finite exponent, bounded by a function $f(n)$ depending only on $n$. With this we state the next problem.

\begin{p2}
Let $G$ be a finite group. Can we find a function $f_2:\mathbb{N}\rightarrow \mathbb{N}$ such that $\exp(\gamma_2(G))\mid f_2(\exp(G/Z(G)))$, and can we describe $f_2$ explicitly for certain classes of groups?
\end{p2}

In \cite{APTIsrael}, the authors prove that for finite $p$-groups of class at most $p+1$, $f_2$ can be taken to be the identity. One of the main objectives of the present paper is to answer Problem $2$ and show that for finite metabelian $p$-groups of class at most $2p-1$, $f_2$ can be taken to be the identity, and for finite $p$-groups of class $c$, $f_2$ can be taken as $f_2(n)=p^{n+\ceil{\log_pc}-1}$. In particular, we prove:

\begin{th26}
Let $p$ be an odd prime and $G$ be a finite metabelian $p$-group. If the nilpotency class of $G$ is less than or equal to $2p-1$, then $\exp(\gamma_2(G))\mid \exp(G/Z(G))$.
\end{th26}

\begin{th31}
Let $p$ be a prime and $G$ be a $p$-group. If the nilpotency class of $G$ is $c$, then $\exp(\gamma_{2}(G))\mid p^{\ceil{\log_pc}-1}\exp(G/Z(G))$.
\end{th31}

Using these bounds, we answer Problem $1$ for metabelian $p$-groups of class at most $2p-1$, and finite $p$-groups of a given nilpotency class. In particular, we bound $\exp(G)$ by a function of the exponent of a Sylow $p$-subgroup of $\operatorname{Aut}(G)$, and prove

\begin{th35}
Let $G$ be a finite $p$-group and $S$ be a Sylow $p$-subgroup of $\operatorname{Aut}(G)$ with $\exp(S)=q$.
\begin{itemize} 
\item [$(i)$] If $G$ is metabelian $p$-group of class at most $2p-1$, then $\exp(G)\mid pq^3$.
\item [$(ii)$] If the nilpotency class of $G$ is $c$, then $\exp(G)\mid p^{\ceil{\log_pc}}q^3$.
\end{itemize}
\end{th35}

Let $G$ be a finite group, then Schur's exponent problem states that $\exp(H_2(G,\mathbb{Z})) \mid \exp(G)$.  In \cite{BKW}, the authors found a counterexample to this problem. Their counterexample involved a $2$-group of order $2^{68}$ with $\exp(G)=4$ and $\exp(H_2(G,\mathbb{Z}))=8$. A detailed account of this problem can be found in \cite{VT1}. More recently, the author of \cite{MVL} found counterexamples to Schur's exponent problem for odd order groups. He found a $5$-group $G$ of class $9$ such that $\exp(G)=5$ and $\exp(H_2(G, \mathbb{Z}))=25$, as well as a $3$-group $G$ of class $9$ such that $\exp (G)=9$ and $\exp(H_2(G, \mathbb{Z}))=27$. All of these counterexamples satisfy $\exp(H_2(G,\mathbb{Z})) \mid p \exp(G)$. In \cite{CM}, Miller proved that $H_2(G, \mathbb{Z})$ is a normal subgroup of $G\wedge G$. The group $G\wedge G$ is isomorphic with the commutator subgroup $\gamma_2(H)$ of any covering group $H$ of $G$. Noting that $\exp(\gamma_2(H))\mid f(\exp(H/Z(H)))$ for every group $H$ of class $c$ is equivalent to $\exp(G\wedge G)\mid f(\exp(G))$ for every group $G$ of class $c-1$, Theorem \ref{th:3.1} immediately yields:

\begin{cr32}
Let $p$ be an odd prime and $G$ be a finite $p$-group. If the nilpotency class of $G$ is $c$, then $\exp(G\wedge G)\mid p^{n-1}\exp(G)$, for $n=\ceil{\log_p(c+1)}$. In particular, $\exp(H_2(G, \mathbb{Z}))\mid p^{n-1}\exp(G)$.
\end{cr32}

\begin{cr33}
Let $p$ be an odd prime and $G$ be a finite $p$-group. If the nilpotency class of $G$ is less than or equal to $p^2-1$, then $\exp(G\wedge G)\mid p\exp(G)$. In particular, $\exp(H_2(G, \mathbb{Z}))\mid p\exp(G)$.
\end{cr33}

With this evidence in hand, we conjecture:

\begin{con}
If $G$ is a finite $p$ group, then $\exp(H_2(G, \mathbb{Z}))\mid p\exp(G)$.
\end{con}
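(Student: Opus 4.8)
The plan is to attack the conjecture through the covering-group reduction that already yields Corollaries 3.2 and 3.3, and to locate precisely where that reduction stops giving the sharp constant. Let $G$ be a finite $p$-group of class $c$. By Miller's theorem $\exp(H_2(G,\mathbb{Z}))$ divides $\exp(G\wedge G)$, so it is enough to bound the latter; and since $G\wedge G\cong\gamma_2(H)$ for a covering group $H$, the equivalence recorded in the introduction turns the divisibility $\exp(G\wedge G)\mid f(\exp(G))$ for all class-$c$ groups into $\exp(\gamma_2(H))\mid f(\exp(H/Z(H)))$ for all class-$(c+1)$ groups $H$. The latter is exactly a Schur-type exponent bound of the kind supplied by Theorem 3.1.

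Theorem 3.1 applied in class $c+1$ gives $\exp(\gamma_2(H))\mid p^{\ceil{\log_p(c+1)}-1}\exp(H/Z(H))$, hence $\exp(G\wedge G)\mid p^{\ceil{\log_p(c+1)}-1}\exp(G)$, which is Corollary 3.2. The exponent $\ceil{\log_p(c+1)}-1$ equals $0$ or $1$ exactly when $c+1\le p^2$, so for every $p$-group of class at most $p^2-1$ the reduction already delivers the conjectured divisibility $\exp(H_2(G,\mathbb{Z}))\mid p\exp(G)$; this is Corollary 3.3, and it is the part of the conjecture I can establish outright. What remains is to show that a single factor of $p$ suffices in all classes, in place of the growing $p^{\ceil{\log_p(c+1)}-1}$.

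The main obstacle is precisely this growth: the bound in Theorem 3.1 gains roughly one power of $p$ each time the class is multiplied by $p$ (hence the $\log_p$), so once the class reaches $p^2$ it already overshoots the conjectured $p$. To eliminate the excess I would try either (i) a reduction peeling off the top of the lower central series of $H$ so as to return the general case to the class-$(p^2-1)$ case already settled, at no further cost in $p$, or (ii) an extension of the sharp metabelian estimate behind Theorem 2.6 --- where $f_2$ is the identity up to class $2p-1$ --- beyond the metabelian setting. A secondary obstacle is that Corollaries 3.2 and 3.3 require $p$ odd whereas the conjecture admits $p=2$, so the reduction must be made to work there too. The evidence is nonetheless encouraging: every known counterexample satisfies $\exp(H_2(G,\mathbb{Z}))\mid p\exp(G)$, with equality in the listed cases --- the BKW $2$-group has $\exp(H_2)=8=2\exp(G)$, and the $3$-group of class $9$ has $\exp(G)=9$ and $\exp(H_2)=27=3\exp(G)$, the latter lying in the hard regime $c\ge p^2$ where Corollaries 3.2 and 3.3 fail to give the sharp constant.
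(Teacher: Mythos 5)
The statement you set out to prove is posed in the paper as a \emph{conjecture}, and the paper contains no proof of it: its only support is the evidence assembled immediately before it, namely Corollaries 3.2 and 3.3 (obtained from Theorem 3.1 through Miller's theorem and the covering-group equivalence) together with the observation that every known counterexample to Schur's exponent problem still satisfies $\exp(H_2(G,\mathbb{Z}))\mid p\exp(G)$. Your proposal reconstructs exactly this evidence, and does so correctly: applying Theorem 3.1 in class $c+1$ gives $\exp(G\wedge G)\mid p^{\ceil{\log_p(c+1)}-1}\exp(G)$, the exponent $\ceil{\log_p(c+1)}-1$ is at most $1$ precisely when $c\le p^2-1$, and your numerical data on the counterexamples are accurate (the BKW $2$-group with $8=2\cdot 4$, and the $3$-group of class $9\ge p^2$ with $27=3\cdot 9$, which indeed sits outside the range of Corollary 3.3).

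But this is where both you and the paper stop, and you should be clear that what you have is not a proof. Your items (i) and (ii) --- peeling off the top of the lower central series at no further cost in $p$, and extending the metabelian bound of Theorem 2.6 beyond the metabelian setting --- are programs, not arguments: neither is carried out, and neither is obviously feasible. In particular, any extension of Theorem 2.6 must already concede the factor $p$ at class $9$, since the $3$-group and $5$-group counterexamples show $\exp(H_2(G,\mathbb{Z}))\nmid\exp(G)$ there, so the identity bound certainly does not propagate to arbitrary class; and the $p=2$ case (where Corollaries 3.2 and 3.3 are not even available) is flagged but left untouched. So graded as a proof of the conjecture, the proposal has a genuine and unavoidable gap --- the conjecture is open --- but as a map of the known partial results and of the precise point where the covering-group reduction loses sharpness, it matches the paper's own reasoning step for step.
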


It is not known if $\exp(H_2(G, \mathbb{Z}))\mid \exp(G)$ for metabelian groups. Our contribution towards this question is the following theorem:

\begin{th25}\label{th:2.5}
Let $p$ be an odd prime and $G$ be a finite metabelian $p$-group. If the nilpotency class of $G$ is less than or equal to $2p-1$, then $\exp(G\wedge G)\mid \exp(G)$. In particular, $\exp(H_2(G, \mathbb{Z}))\mid \exp(G)$.
\end{th25}

\section{Schur's exponent conjecture for metabelian $p$-groups of class $2p-1$}

Let $G$ be a finite $p$-group with $\exp(G)=p^n$. Note that if $G$ belongs to the class of regular $p$-groups, powerful $p$-groups or potent $p$-groups, then $G^p$ is powerful and $\exp(G^p)=p^{n-1}$. In this section, we prove the same for metabelian $p$-groups of class at most $2p-1$. As a consequence, we get the Schur's exponent conjecture for metabelian $p$-groups of class $2p-1$. We begin with the following theorem:

\begin{theorem}\label{th:2.1}(See \cite[Theorem 2.4 and Theorem 2.5]{ASZ})
Let $G$ be a finite $p$-group. For $N, M$, normal subgroups of $G$ we have
\begin{itemize}
\item [$(i)$] $[N^{p^n}, M] \equiv [N, M]^{p^n} \mod \prod_{r=1}^n [M,\ _{p^r}\ N]^{p^{n-r}}$. 
\item [$(ii)$] $[N^{p^n},\ _l\ G] \equiv [N,\ _l\ G]^{p^n} \mod \prod_{r=1}^{n} [N,\ _{p^r+l-1}\ G]^{p^{n-r}}$.
\end{itemize}
\end{theorem}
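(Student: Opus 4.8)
The plan is to derive both congruences from the Hall--Petrescu collection formula by induction on $n$, after reducing each statement to a computation with single commutators. Since $[N^{p^n},M]$, $[N,M]^{p^n}$ and each $[M,{}_{k}N]$ are normal and generated by commutators of elements of $N$ with elements of $M$ (with extra $N$-entries appended), it suffices to establish an element-level identity for $[a^{p^n},b]$ with $a\in N$, $b\in M$, and then upgrade to the subgroup congruence.

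First I would record the basic expansion $[a^{m},b]=\prod_{i=0}^{m-1}[a,b]^{a^{i}}$, valid in any group, and collect the conjugates $[a,b]^{a^{i}}$ to the left. Collection produces $[a,b]^{m}$ followed by a product of the left-normed commutators $[a,b,{}_{k-1}a]$ for $k\ge 2$, each carrying the exponent $\binom{m}{k}$, modulo commutators of strictly larger weight (those in which $b$ occurs more than once). Since $[a,b,{}_{k-1}a]\in[M,{}_{k}N]$, this yields $[N^{m},M]\equiv[N,M]^{m}$ modulo $\prod_{k\ge 2}[M,{}_{k}N]^{\binom{m}{k}}$. I would then specialize to $m=p^{n}$ and invoke Kummer's theorem, $v_{p}\binom{p^{n}}{j}=n-v_{p}(j)$: a factor $[M,{}_{j}N]^{\binom{p^{n}}{j}}$ with $v_{p}(j)=s$ lies in $[M,{}_{j}N]^{p^{n-s}}$, and because $j\ge p^{s}$ forces $[M,{}_{j}N]\subseteq[M,{}_{p^{s}}N]$, the containment $A\subseteq B\Rightarrow A^{p^{k}}\subseteq B^{p^{k}}$ for subgroups pushes this factor into $[M,{}_{p^{s}}N]^{p^{n-s}}$. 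Assembling these over $s$ is designed to reproduce the stated modulus $\prod_{r=1}^{n}[M,{}_{p^{r}}N]^{p^{n-r}}$, giving $(i)$.

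For $(ii)$ I would induct on $l$, writing $[N^{p^{n}},{}_{l}G]=[[N^{p^{n}},{}_{l-1}G],G]$ and applying $(i)$ to the inner factor with first argument $[N,{}_{l-1}G]$ and second argument $G$. The leading term is then $[N,{}_{l}G]^{p^{n}}$, the shift by $l-1$ explains the index $p^{r}+l-1$, and the three-subgroup lemma would be used to confirm that the error terms $[G,{}_{p^{r}}[N,{}_{l-1}G]]^{p^{n-r}}$ fall inside $\prod_{r}[N,{}_{p^{r}+l-1}G]^{p^{n-r}}$.

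The main obstacle is the bookkeeping of the error terms, of which there are two kinds: the factors $[M,{}_{j}N]^{\binom{p^{n}}{j}}$ with $j$ not a power of $p$, and the deeper commutators generated by the non-commutativity of the conjugates $[a,b]^{a^{i}}$ during collection. Showing that \emph{all} of these are absorbed into the claimed modulus, rather than into some strictly larger subgroup, is the delicate step; I expect it to be handled by a downward induction on commutator weight, reapplying the collection identity to the stray $p$-power terms and using the nilpotency of $G$ to terminate the recursion. Verifying that this reabsorption lands exactly in $\prod_{r=1}^{n}[M,{}_{p^{r}}N]^{p^{n-r}}$ is where the argument requires the most care.
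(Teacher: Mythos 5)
The paper itself does not prove this statement: Theorem~2.1 is quoted from \cite[Theorems 2.4 and 2.5]{ASZ} without proof, so your attempt can only be judged on its own merits. You have assembled the right toolbox for part $(i)$ --- Hall--Petrescu collection plus $v_p\binom{p^n}{j}=n-v_p(j)$ --- but the write-up has genuine gaps. First, the opening reduction is invalid for one of the two inclusions that the congruence asserts. Modulo $W=\prod_{r=1}^{n}[M,{}_{p^r}N]^{p^{n-r}}$ the statement means $[N^{p^n},M]\,W=[N,M]^{p^n}\,W$. Your element-level identity for $[a^{p^n},b]$, once its error terms are controlled, does give $[N^{p^n},M]\le [N,M]^{p^n}W$, because $N^{p^n}$ is generated by the elements $a^{p^n}$ and $[xy,b]=[x,b]^y[y,b]$. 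But $[N,M]^{p^n}$ is \emph{not} generated by the powers $[a,b]^{p^n}$ of single commutators; it is generated by $c^{p^n}$ for arbitrary $c\in[N,M]$, and $c^{p^n}$ is not the product of the $p^n$-th powers of the commutators composing $c$. The reverse inclusion therefore needs the product form of Hall--Petrescu, $(u_1\cdots u_k)^{p^n}\equiv u_1^{p^n}\cdots u_k^{p^n}$ modulo factors $d_i^{\binom{p^n}{i}}$ with $d_i\in\gamma_i([N,M])$, the containment $\gamma_i([N,M])\le[M,{}_iN]$ (from $[N,M]\le N$ and normality), and then a downward induction on weight, terminating by nilpotency, to push the prime-to-$p$-index factors $[M,{}_iN]^{p^n}$ into $[N^{p^n},M]\,W$; none of this appears in your sketch. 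Second, even for a single $[a^{p^n},b]$, your hand-rolled collection of the conjugates $[a,b]^{a^i}$ does not control the terms in which $b$ occurs more than once: those arise from swaps with exponent $1$, not with binomial exponents, and weight considerations place them only in $[M,{}_3N]$, which for $p\ge 5$, $n=1$ is not inside $W=[M,{}_pN]$. The statement is rescued precisely by the exact Hall--Petrescu formula, $[a^{p^n},b]=a^{-p^n}\bigl(a[a,b]\bigr)^{p^n}=[a,b]^{p^n}\prod_{i\ge2}c_i^{\binom{p^n}{i}}$ with $c_i\in\gamma_i(\langle a,[a,b]\rangle)\le[M,{}_iN]$, which bundles \emph{all} error terms with binomial exponents; this is exactly the ``delicate step'' you flag but do not resolve.

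For part $(ii)$ your route fails outright. Inducting on $l$, the hypothesis gives $[N^{p^n},{}_{l-1}G]\equiv[N,{}_{l-1}G]^{p^n}$ modulo $E_{l-1}=\prod_r[N,{}_{p^r+l-2}G]^{p^{n-r}}$, so after commutating with $G$ you must handle not only the terms $[G,{}_{p^r}[N,{}_{l-1}G]]^{p^{n-r}}$ that you mention (these are indeed fine: they lie in $[N,{}_{p^r+l-1}G]^{p^{n-r}}$) but also $[E_{l-1},G]=\prod_r\bigl[[N,{}_{p^r+l-2}G]^{p^{n-r}},G\bigr]$, which your sketch omits entirely. To retain the exponent $p^{n-r}$ on such a factor you must apply $(i)$ to it again, and the resulting error terms lie in $[N,{}_{p^r+p^{r'}+l-2}G]^{p^{n-r-r'}}$. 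These cannot be absorbed into $\prod_s[N,{}_{p^s+l-1}G]^{p^{n-s}}$: matching exponents forces $s\ge r+r'$, while matching depths forces $p^s\le p^r+p^{r'}-1<p^{r+r'}$, a contradiction. Under iteration the depths \emph{add} ($p^r+p^{r'}$) while the modulus requires them to \emph{multiply} ($p^{r+r'}$), so the recursion never lands inside the claimed subgroup, no matter how nilpotency is invoked. This is why the source proves $(ii)$ as a separate theorem with its own collection argument rather than deducing it from $(i)$ by induction on $l$.
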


In the next proposition, we obtain upper bounds for the exponent of certain subgroups of a finite metabelian $p$-group of class at most $2p-1$.

\begin{prop}\label{P:2.2}
Let $p$ be an odd prime and $G$ be a finite metabelian $p$-group of class less than or equal to $2p-1$. If $\exp(G)=p^{n}\ge p^2$, then 
\begin{itemize}
\item [$(i)$] $\exp(\gamma_2(G^p))\mid p^{n-1}$, 
\item [$(ii)$] $\exp(\gamma_3(G^p))\mid p^{n-2}$.
\end{itemize}
\end{prop}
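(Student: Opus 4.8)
The plan is to rewrite every commutator that involves $G^{p}$ as a $p$-th power of a term of the lower central series of $G$ by means of Theorem~\ref{th:2.1}, and to exploit the hypothesis that the class is at most $2p-1$ to annihilate all the correction factors. Since $G$ is metabelian, $\gamma_{2}(G)$ is abelian and every subgroup occurring below lies inside it; for an abelian group $A$ with $\exp(A)\mid p^{n}$ one has $\exp(A^{p^{j}})\mid p^{n-j}$, so the whole argument reduces to locating the relevant subgroups inside powers $\gamma_{k}(G)^{p^{j}}$ and reading off the exponent $j$.

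First I would record two elementary computations. Applying Theorem~\ref{th:2.1}$(ii)$ with $l=p$ and exponent $p$ gives $[G^{p},{}_{p}G]\equiv\gamma_{p+1}(G)^{p}\ (\mathrm{mod}\ \gamma_{2p}(G))$, and $\gamma_{2p}(G)=1$ by the class bound, so $[G^{p},{}_{p}G]=\gamma_{p+1}(G)^{p}$. Applying Theorem~\ref{th:2.1}$(i)$ with $N=M=G$ gives $[G^{p},G]\subseteq\gamma_{2}(G)^{p}\gamma_{p+1}(G)$. Feeding these into Theorem~\ref{th:2.1}$(i)$ with $N=G$, $M=G^{p}$, namely $[G^{p},G^{p}]\subseteq[G,G^{p}]^{p}\,[G^{p},{}_{p}G]$, and raising the second inclusion to the $p$-th power (legitimate because $\gamma_{2}(G)$ is abelian), I obtain the master containment
\[
\gamma_{2}(G^{p})=[G^{p},G^{p}]\subseteq\gamma_{2}(G)^{p^{2}}\,\gamma_{p+1}(G)^{p}.
\]
Statement $(i)$ is then immediate: the right-hand side is a product of two subgroups of the abelian group $\gamma_{2}(G)$ whose exponents divide $p^{n-2}$ and $p^{n-1}$ respectively, whence $\exp(\gamma_{2}(G^{p}))\mid p^{n-1}$.

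For $(ii)$ I would commute the master containment once more with $G^{p}$. All of $\gamma_{2}(G)^{p^{2}}$, $\gamma_{p+1}(G)^{p}$ and $G^{p}$ are characteristic, hence normal, so $[AB,G^{p}]=[A,G^{p}][B,G^{p}]$ and thus $\gamma_{3}(G^{p})\subseteq[\gamma_{2}(G)^{p^{2}},G^{p}]\,[\gamma_{p+1}(G)^{p},G^{p}]$. Each factor is again handled by Theorem~\ref{th:2.1}$(i)$: the modulus terms $[G^{p},{}_{p^{r}}\gamma_{2}(G)]$ and $[G^{p},{}_{p}\gamma_{p+1}(G)]$ have commutator weight exceeding $2p-1$ and so vanish, leaving $[\gamma_{2}(G)^{p^{2}},G^{p}]=[\gamma_{2}(G),G^{p}]^{p^{2}}\subseteq\gamma_{3}(G)^{p^{3}}\gamma_{p+2}(G)^{p^{2}}$ and $[\gamma_{p+1}(G)^{p},G^{p}]=\gamma_{p+2}(G)^{p^{2}}$. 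Both sit inside $\gamma_{3}(G)\subseteq\gamma_{2}(G)$ with exponent dividing $p^{n-2}$, giving $\exp(\gamma_{3}(G^{p}))\mid p^{n-2}$.

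The main obstacle I anticipate is the bookkeeping in $(ii)$: one must check that every correction subgroup produced by Theorem~\ref{th:2.1} genuinely has weight larger than $2p-1$ (so the class hypothesis forces it to be trivial), while simultaneously confirming that the surviving terms carry the power $p^{2}$ rather than merely $p$ — the factor $\gamma_{p+2}(G)^{p^{2}}$ is the true bottleneck pinning the bound at $p^{n-2}$, and it is precisely here that the class $2p-1$ is used to its limit. A secondary point demanding care is the legitimacy of raising inclusions inside $\gamma_{2}(G)$ to $p$-th powers and of the identity $[AB,C]=[A,C][B,C]$, both of which rest on $\gamma_{2}(G)$ being abelian and on all the groups in sight being normal in $G$.
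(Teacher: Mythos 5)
Your proof is correct, and it takes a route that differs from the paper's in one substantive way, so a comparison is worthwhile. Both arguments rest on Theorem~\ref{th:2.1}, on the fact that every subgroup involved lies in the abelian group $\gamma_2(G)$ (which legitimizes all the power manipulations), and on the class bound killing the deep correction terms. The difference is the pivot containment. The paper first absorbs the correction term $\gamma_{p+1}(G)^p$ into $[G^p,G]^p$ using the external result $\gamma_{p+1}(G)\le [G^p,G]$ from \cite[Theorem 13]{LCKM}, obtaining the clean containment $\gamma_2(G^p)\le [G^p,G]^p$ of \eqref{eq:2.2.1}; part $(i)$ then follows from $[G^p,G]^{p^n}=1$, and part $(ii)$ by commuting \eqref{eq:2.2.1} once more with $G^p$ via the metabelian identity $[N^p,M]=[N,M]^p$ for normal $N\le\gamma_2(G)$. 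You never invoke \cite{LCKM}: you keep the two-term bound $[G^p,G]\le\gamma_2(G)^p\gamma_{p+1}(G)$ and work with the master containment $\gamma_2(G^p)\le \gamma_2(G)^{p^2}\gamma_{p+1}(G)^p$, and your part $(ii)$ distributes the commutator over this product and re-applies Theorem~\ref{th:2.1} to each factor, ending with $\gamma_3(G^p)\le\gamma_3(G)^{p^3}\gamma_{p+2}(G)^{p^2}$; all the weight estimates and exponent bookkeeping you give check out (including the surviving modulus term $[\gamma_2(G),\ _p\ G]\le\gamma_{p+2}(G)$, which you correctly retain rather than discard). What your version buys is self-containedness and a more transparent computation than the paper's rather terse treatment of $(ii)$. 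What it gives up is the containment \eqref{eq:2.2.1} itself, which is stronger than your master containment in a way the paper exploits later: since $[G^p,G]\le G^p$ by normality, \eqref{eq:2.2.1} instantly shows that $G^p$ is powerful, which is how Theorem~\ref{th:2.5} uses it, whereas your bound only yields powerfulness of $G^p$ after additionally knowing $\gamma_{p+1}(G)\le G^p$ (Meier--Wunderli). For the proposition as stated, however, your argument is complete.
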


\begin{proof}
By Theorem \ref{th:2.1} $(i)$, we obtain
\begin{equation*}
[G^p, G^p]\le [G, G^p]^p [G^p,\ _p\ G]
\end{equation*}
Since $\gamma_{2p}(G)=1$, using Theorem \ref{th:2.1} $(ii)$ yields $[G^p,\ _p\ G]=(\gamma_{p+1}(G))^p$. Moreover, $\gamma_{p+1}(G)\le [G^p, G]$ by \cite[Theorem 13]{LCKM}. Thus, we get 
\begin{equation}\label{eq:2.2.1}
[G^p, G^p]\le [G^p, G]^p.
\end{equation}
Hence $(\gamma_2(G^p))^{p^{n-1}}\le [G^p, G]^{p^n}=1$. Note that, as $G$ is metabelian, $[N^p, M]=[N, M]^p$ for normal subgroups $N, M$ of $G$, where $N\le \gamma_2(G)$. Thus, using \eqref{eq:2.2.1} for $[G^p, G^p]$ in $[G^p, G^p, G^p]$, we obtain 
\begin{equation*}
[G^p, G^p, G^p]\le [[G^p, G]^p, G^p]^p=[G^p, G, G^p]^p.
\end{equation*}
Hence $(\gamma_3(G^p))^{p^{n-2}}\le (\gamma_2(G^p))^{p^{n-1}}=1$.
\end{proof}

\begin{corollary}\label{cr:2.3}
Let $p\ge 3$ and $G$ be a metabelian $p$-group of class at most $2p-1$. If $\exp(G)=p^n$, then $\exp(G^p)=p^{n-1}$.
\end{corollary}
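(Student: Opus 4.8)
The plan is to prove Corollary 2.3: for a metabelian $p$-group $G$ of class at most $2p-1$ with $\exp(G)=p^n$, we have $\exp(G^p)=p^{n-1}$.

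The plan is to deduce the equality from the exponent bounds on $\gamma_2(G^p)$ and $\gamma_3(G^p)$ supplied by Proposition \ref{P:2.2}. The inequality $\exp(G^p)\ge p^{n-1}$ is immediate: since $\exp(G)=p^n$ there is a $g\in G$ of order $p^n$, whence $g^p\in G^p$ has order $p^{n-1}$. The content is therefore the reverse divisibility $\exp(G^p)\mid p^{n-1}$, i.e.\ that every element of $G^p$ is annihilated by $p^{n-1}$. I would first dispose of the degenerate case $n=1$: then $\exp(G)=p$ forces $g^p=1$ for all $g$, so $G^p=1$ and there is nothing to prove. Henceforth assume $n\ge 2$, so that Proposition \ref{P:2.2} applies and gives $\exp(\gamma_2(G^p))\mid p^{n-1}$ together with $\exp(\gamma_j(G^p))\mid p^{n-2}$ for all $j\ge 3$, the latter because $\gamma_j(G^p)\le\gamma_3(G^p)$.

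Next, fix $x\in G^p$ and write $x=g_1^p\cdots g_k^p$ with $g_i\in G$. Applying the Hall--Petrescu collection formula to the product $g_1^p\cdots g_k^p$ with exponent $m=p^{n-1}$ yields
\begin{equation*}
x^{p^{n-1}}=(g_1^p)^{p^{n-1}}\cdots(g_k^p)^{p^{n-1}}\prod_{j\ge 2}w_j^{\binom{p^{n-1}}{j}},
\end{equation*}
where each $w_j\in\gamma_j(\langle g_1^p,\dots,g_k^p\rangle)\le\gamma_j(G^p)$. Each leading factor is trivial, since $(g_i^p)^{p^{n-1}}=g_i^{p^n}=1$ as $\exp(G)=p^n$. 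Because $G$, and hence $G^p$, has class at most $2p-1$, the group $\gamma_j(G^p)$ is trivial for $j\ge 2p$, so the product runs only over $2\le j\le 2p-1$. It thus remains to show that each factor $w_j^{\binom{p^{n-1}}{j}}$ is trivial.

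This is where the class hypothesis enters decisively. I would use the standard fact that for $1\le j\le p^{a}$ one has $v_p\!\left(\binom{p^{a}}{j}\right)=a-v_p(j)$ (this follows from $\binom{p^a}{j}=\tfrac{p^a}{j}\binom{p^a-1}{j-1}$ and Lucas' theorem, the second factor being prime to $p$), while $\binom{p^a}{j}=0$ for $j>p^a$. For $j=2$, since $p$ is odd, $v_p\!\left(\binom{p^{n-1}}{2}\right)=n-1$, and as $\exp(\gamma_2(G^p))\mid p^{n-1}$ the factor $w_2^{\binom{p^{n-1}}{2}}$ is trivial. For $3\le j\le 2p-1$ the crucial observation is that $2p-1<p^2$, so $v_p(j)\le 1$; consequently, whenever $j\le p^{n-1}$ we get $v_p\!\left(\binom{p^{n-1}}{j}\right)=(n-1)-v_p(j)\ge n-2$, while for $j>p^{n-1}$ the coefficient vanishes. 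In either case $p^{n-2}\mid\binom{p^{n-1}}{j}$ (trivially so when the coefficient is $0$), and since $\exp(\gamma_j(G^p))\mid p^{n-2}$ for $j\ge 3$, the factor $w_j^{\binom{p^{n-1}}{j}}$ is trivial. Therefore $x^{p^{n-1}}=1$; as $x\in G^p$ was arbitrary, $\exp(G^p)\mid p^{n-1}$, and with the lower bound this gives $\exp(G^p)=p^{n-1}$.

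The main obstacle is the bookkeeping of the higher collection terms $w_j$ with $3\le j\le 2p-1$: one must simultaneously exploit the exponent bound $p^{n-2}$ on $\gamma_j(G^p)$ from Proposition \ref{P:2.2} and the fact that the class restriction $2p-1<p^2$ forces $v_p(j)\le 1$, so that the binomial coefficients $\binom{p^{n-1}}{j}$ carry enough powers of $p$ to annihilate the $w_j$. Were the class permitted to reach $p^2$, a term with $v_p(j)=2$ would occur and this balance would fail, which is precisely why a bound below $p^2$ (here $2p-1$) is essential.
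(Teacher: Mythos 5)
Your proof is correct and takes essentially the same route as the paper: apply Hall's collection formula to $(g_1^p\cdots g_k^p)^{p^{n-1}}$ inside $G^p$, kill the leading factors using $\exp(G)=p^n$, and kill every correction term using Proposition \ref{P:2.2} together with the fact that the class bound $2p-1<p^2$ keeps all surviving terms below $\gamma_{p^2}$. The only difference is presentational: the paper invokes the packaged congruence $(x_1^p\cdots x_k^p)^{p^{n-1}}\equiv x_1^{p^n}\cdots x_k^{p^n} \bmod (\gamma_2(G^p))^{p^{n-1}}(\gamma_p(G^p))^{p^{n-2}}$, whereas you re-derive it explicitly from the Hall--Petrescu identity and the $p$-adic valuation of the binomial coefficients.
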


\begin{proof}
We may assume $n\ge 2$. To prove $\exp(G^p)=p^{n-1}$, we prove that $(x_1^p\cdots x_k^p)^{p^{n-1}}=1$ for all $x_1, \ldots x_k\in G$, $k\ge 2$. Using Hall's commutator collection formula, we have
\begin{equation*}
(x_1^p\cdots x_k^p)^{p^{n-1}}\equiv x_1^{p^n}\cdots x_k^{p^n} \mod (\gamma_2(G^p))^{p^{n-1}} (\gamma_p(G^p))^{p^{n-2}}.
\end{equation*}
Now by Proposition \ref{P:2.2}, we obtain $(x_1^p\cdots x_k^p)^{p^{n-1}}=1$.
\end{proof}

Now we recall \cite[Lemma 3.9]{APTIsrael}.
\begin{lemma}\label{L:2.4}
Let $p$ be an odd prime and $G$ be a finite $p$-group with $\exp(G)=p^n$. Suppose $G$ satisfies the following conditions:
\begin{itemize}
\item [$(i)$] $G^{p}$ is powerful.
\item[$(ii)$] $\exp(G^{p})=p^{n-1}$.
\item [$(iii)$] $\gamma_{p+1}(G)\le G^{p}$.
\end{itemize}
Then $\exp(G\wedge G)\mid \e(G)$. In particular, $\exp(H_2(G, \mathbb{Z}))|\exp(G)$.
\end{lemma}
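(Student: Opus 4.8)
The plan is to show directly that the $p^{n}$-th power map kills $G\wedge G$, where $p^{n}=\exp(G)$; since $H_2(G,\mathbb{Z})$ embeds in $G\wedge G$ via the exact sequence $1\to H_2(G,\mathbb{Z})\to G\wedge G\xrightarrow{\kappa}\gamma_2(G)\to 1$ (with $\kappa(g\wedge h)=[g,h]$), the ``in particular'' clause is then immediate. Recall that $G\wedge G$ is generated by the elements $g\wedge h$, subject to the bimultiplicative-up-to-conjugation relations $xy\wedge h=({}^{x}(y\wedge h))(x\wedge h)$ and their analogues in the second variable. As in the proof of Corollary \ref{cr:2.3}, once I control the orders of the generators together with the exponents of the terms of the lower central series of $G\wedge G$, a Hall collection argument reduces the global exponent bound to a statement about generators; so the real content is to bound $(g\wedge h)^{p^{n}}$ and the relevant commutator layers.

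The structural device I would use is the natural epimorphism $\mu\colon G\wedge G\twoheadrightarrow (G/G^{p})\wedge(G/G^{p})$ induced by $G\to G/G^{p}$, together with the right-exact sequence $G^{p}\wedge G\to G\wedge G\xrightarrow{\mu}(G/G^{p})\wedge(G/G^{p})\to 1$, so that $\ker\mu$ is the image of $G^{p}\wedge G$. For any group extension $1\to K\to E\to Q\to 1$ one has $\exp(E)\mid\exp(K)\exp(Q)$, so it suffices to prove $\exp\bigl((G/G^{p})\wedge(G/G^{p})\bigr)\mid p$ and $\exp(\ker\mu)\mid p^{n-1}$; multiplying then gives $\exp(G\wedge G)\mid p^{n}=\exp(G)$. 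The top factor is governed by $G/G^{p}$, which by hypothesis $(iii)$ has exponent $p$ and nilpotency class at most $p$; for such a quotient I would show its exterior square has exponent dividing $p$, adapting the regular-$p$-group estimates (this is where $(iii)$ enters decisively, keeping the class below the range where the exponent could jump).

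For $\ker\mu$, the image of $G^{p}\wedge G$, I would exploit hypotheses $(i)$ and $(ii)$: since $G^{p}$ is powerful of exponent $p^{n-1}$, the known exponent bound for powerful $p$-groups (Moravec) gives $\exp(G^{p}\wedge G^{p})\mid p^{n-1}$, and $G^{p}\wedge G^{p}$ is again powerful. The remaining mixed generators $a\wedge g$ with $a\in G^{p}$, $g\in G$, are handled by transplanting the commutator--power congruences of Theorem \ref{th:2.1} into $G\wedge G$: writing $a=x^{p}$ and collecting $x^{p}\wedge g\equiv(x\wedge g)^{p}$ modulo higher exterior commutators via Hall--Petrescu, the powerfulness of $G^{p}$ forces the correction terms into layers whose exponent drops by the required power of $p$. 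An induction on the nilpotency class, using $\gamma_{p+1}(G)\le G^{p}$ to feed high-weight commutators back into the powerful part, should then yield $\exp(\ker\mu)\mid p^{n-1}$.

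The hard part will be precisely this last step: controlling the non-abelian correction terms in the power formula for $g\wedge h$ inside $G\wedge G$. Unlike the situation in $G$ itself, where Theorem \ref{th:2.1} supplies the congruences directly, one must re-derive the analogous $[\,\cdot\,^{p},\,\cdot\,]\equiv[\,\cdot\,,\,\cdot\,]^{p}$ behaviour for the exterior commutators and verify that every collected term lands in a subgroup whose exponent has already been bounded --- this is where all three hypotheses must be used simultaneously, and where an unfavourable interaction between the commutator collection and the power map would be the main risk. Once the two exponent bounds are in hand, the extension estimate closes the argument.
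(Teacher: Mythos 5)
Your outer frame is sound and, in fact, both reductions you make are to true statements: the right-exact sequence $G^p\wedge G\to G\wedge G\xrightarrow{\mu}(G/G^p)\wedge(G/G^p)\to 1$, the divisibility $\exp(E)\mid\exp(K)\exp(Q)$ for any extension, and Miller's embedding of $H_2(G,\mathbb{Z})$ into $G\wedge G$ are all correct. (For comparison, note the paper itself does not prove this lemma; it recalls it from \cite[Lemma 3.9]{APTIsrael}, so your argument has to stand on its own.) The fatal gap is exactly the step you yourself flag as ``the hard part'': you never establish that the image of $G^p\wedge G$ has exponent dividing $p^{n-1}$, and the sketch you give for it does not work as stated. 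Moravec's theorem on powerful groups bounds $\exp(G^p\wedge G^p)$, hence the image of the exterior square of $G^p$, but $\ker\mu$ is generated by the mixed elements $x^p\wedge g$, and these do \emph{not} expand as $p$-th powers: from the defining relation $xy\wedge g=\bigl({}^{x}(y\wedge g)\bigr)(x\wedge g)$ one gets $x^p\wedge g=\prod_{j=0}^{p-1}{}^{x^j}(x\wedge g)$, a product of $p$ distinct conjugates of $x\wedge g$, so its order has no obvious relation to $p^{n-1}$; turning this into $(x\wedge g)^p$ times controlled corrections is precisely a Hall--Petrescu collection \emph{inside} $G\wedge G$, which is the content you have not supplied. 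Moreover, even if each generator were shown to have order dividing $p^{n-1}$, that would not bound $\exp(\ker\mu)$: the collection argument you invoke needs the exponents of the commutator layers of $\ker\mu$, which is what you are trying to prove --- as sketched, the argument is circular. The device that makes such computations legitimate (and the one used in the cited source and throughout this literature) is to replace $G\wedge G$ by $\gamma_2(H)$ for a covering group $H$ of $G$, or to work in Rocco's group $\nu(G)$, so that Theorem \ref{th:2.1} and Hall--Petrescu apply verbatim to honest commutators; your plan to ``re-derive'' these congruences for exterior commutators is the missing proof, not a routine transplantation.

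There is a second, repairable, gap in the top factor. You justify $\exp\bigl((G/G^p)\wedge(G/G^p)\bigr)\mid p$ by ``adapting the regular-$p$-group estimates,'' but hypothesis $(iii)$ only gives that $Q=G/G^p$ has exponent $p$ and class at most $p$, and a $p$-group of exponent $p$ and class exactly $p$ need not be regular, so those estimates do not apply. The claim is nonetheless true by the covering-group trick: a covering group $H$ of $Q$ has class at most $p+1$ and $H/Z(H)$ is a quotient of $Q$, so \cite[Theorem 2.4]{APTIsrael} yields $\exp(Q\wedge Q)=\exp(\gamma_2(H))\mid\exp(H/Z(H))\mid p$. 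With that repair the top factor is fine; but the real content of the lemma is the bound on $\ker\mu$, which your proposal asserts rather than proves.
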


\begin{theorem}\label{th:2.5}
Let $p$ be an odd prime and $G$ be a finite metabelian $p$-group. If the nilpotency class of $G$ is less than or equal to $2p-1$, then $\exp(G\wedge G)\mid \exp(G)$. In particular, $\exp(H_2(G, \mathbb{Z}))\mid \exp(G)$.
\end{theorem}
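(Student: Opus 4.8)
The plan is to derive the theorem directly from Lemma \ref{L:2.4} by checking its three hypotheses for $G$; writing $\exp(G)=p^n$, essentially all of the commutator calculus has already been done in Proposition \ref{P:2.2} and Corollary \ref{cr:2.3}, so what remains is bookkeeping together with one boundary case.

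First I would dispose of the generic case $n\ge 2$. Condition $(ii)$ of Lemma \ref{L:2.4}, that $\exp(G^p)=p^{n-1}$, is exactly Corollary \ref{cr:2.3}. Condition $(iii)$, that $\gamma_{p+1}(G)\le G^p$, follows from the inequality $\gamma_{p+1}(G)\le [G^p,G]$ of \cite[Theorem 13]{LCKM} (already invoked in the proof of Proposition \ref{P:2.2}) together with $[G^p,G]\le G^p$. The one condition demanding a genuine, if short, argument is $(i)$, the powerfulness of $G^p$. For this I would begin from \eqref{eq:2.2.1}, namely $[G^p,G^p]\le [G^p,G]^p$. Since $G^p$ is characteristic, hence normal, in $G$ we have $[G^p,G]\le G^p$, and passing to $p$-th powers gives $[G^p,G]^p\le (G^p)^p$. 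Chaining these inclusions yields $[G^p,G^p]\le (G^p)^p$, which is precisely the assertion that $G^p$ is powerful, as $p$ is odd. Having verified $(i)$--$(iii)$, Lemma \ref{L:2.4} delivers $\exp(G\wedge G)\mid \exp(G)$, and the statement about $H_2(G,\mathbb{Z})$ follows at once.

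The step I expect to be the real obstacle is the boundary case $\exp(G)=p$, i.e. $n=1$, which lies outside the standing hypothesis $p^n\ge p^2$ of Proposition \ref{P:2.2} and Corollary \ref{cr:2.3}. Now $G^p=1$, so $(i)$ and $(ii)$ hold vacuously, while $(iii)$ degenerates to the demand that $\gamma_{p+1}(G)=1$, that is, that $G$ be of class at most $p$. This does not follow on the nose from the hypothesis that the class is at most $2p-1$, so I would appeal to the classical theorem of Meier and Wunderli that a metabelian group of prime exponent $p$ is nilpotent of class at most $p$. Granting this, $\gamma_{p+1}(G)=1\le G^p$, and Lemma \ref{L:2.4} applies again, giving $\exp(G\wedge G)\mid p$. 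In short, the only point requiring vigilance is that when the exponent collapses to $p$ the admissible class silently drops from $2p-1$ to $p$; everything else is a direct assembly of the results proved above.
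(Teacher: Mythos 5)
Your proposal is correct and follows essentially the same route as the paper: verify the three hypotheses of Lemma \ref{L:2.4} using \eqref{eq:2.2.1} for powerfulness, Corollary \ref{cr:2.3} for $\exp(G^p)=p^{n-1}$, and the inclusion $\gamma_{p+1}(G)\le G^p$, with the exponent-$p$ case handled separately via Meier--Wunderli. The only (immaterial) differences are that the paper derives $\gamma_{p+1}(G)\le G^p$ from \cite[Satz.\ 3]{MW} applied to $G/G^p$ rather than from \cite[Theorem 13]{LCKM}, and for $n=1$ it cites \cite[Theorem 2.4]{APTIsrael} directly instead of feeding the degenerate case back into Lemma \ref{L:2.4} as you do.
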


\begin{proof}
Let $\exp(G)=p^n$, we will show that $G$ satisfies the hypothesis of Lemma \ref{L:2.4}. If $\exp(G)=p$, then $G$ has class at most $p$ by  \cite[Satz.\ 3]{MW}. Hence the theorem holds by \cite[Theorem 2.4]{APTIsrael}. Now we may assume $n\ge 2$. It follows that $G^p$ is powerful by \eqref{eq:2.2.1}, and $\exp(G^p)=p^{n-1}$ by Corollary \ref{cr:2.3}. Moreover, \cite[Satz.\ 3]{MW} yields $\gamma_{p+1}(G)\le G^p$ as required. 
\end{proof}

\begin{theorem}\label{th:2.6}
Let $p$ be an odd prime and $G$ be a finite metabelian $p$-group. If the nilpotency class of $G$ is less than or equal to $2p-1$, then $\exp(\gamma_2(G))\mid \exp(G/Z(G))$.
\end{theorem}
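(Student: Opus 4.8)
The plan is to deduce Theorem~\ref{th:2.6} from Theorem~\ref{th:2.5} by passing to the central quotient $\bar{G}:=G/Z(G)$ and realizing $\gamma_2(G)$ as a homomorphic image of the exterior square $\bar{G}\wedge\bar{G}$. First I would check that $\bar{G}$ inherits all the hypotheses of Theorem~\ref{th:2.5}: it is a finite metabelian $p$-group, being a quotient of the metabelian group $G$, and if $c$ denotes the class of $G$ then $\gamma_c(G)\le Z(G)$, so the class of $\bar{G}$ is at most $c-1\le 2p-1$. Thus Theorem~\ref{th:2.5} applies to $\bar{G}$ and yields $\exp(\bar{G}\wedge\bar{G})\mid \exp(\bar{G})=\exp(G/Z(G))$.

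The core of the argument is an epimorphism $\bar{G}\wedge\bar{G}\twoheadrightarrow \gamma_2(G)$ carrying $\bar{x}\wedge\bar{y}$ to $[x,y]$. This is well defined on cosets modulo the centre, since $[xz,y]=[x,y]$ for $z\in Z(G)$, and the commutator pairing is a crossed pairing, so by the universal property of the exterior square it factors through $\bar{G}\wedge\bar{G}$. To make this precise I would fix a free presentation $1\to R\to F\to G\to 1$ and let $\bar{R}$ be the full preimage of $Z(G)$ in $F$, giving a presentation $1\to\bar{R}\to F\to\bar{G}\to 1$ with $R\le\bar{R}$. Under the standard identifications $\bar{G}\wedge\bar{G}\cong \gamma_2(F)/[F,\bar{R}]$ and $\gamma_2(G)\cong\gamma_2(F)/(\gamma_2(F)\cap R)$, the required map is simply the natural projection, which is defined precisely when $[F,\bar{R}]\le\gamma_2(F)\cap R$. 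Here the inclusion $[F,\bar{R}]\le\gamma_2(F)$ is automatic, while $[F,\bar{R}]\le R$ is exactly where the centrality enters: the image of $[F,\bar{R}]$ in $G=F/R$ is $[G,Z(G)]=1$. Since $\gamma_2(G)$ is thus a quotient of $\bar{G}\wedge\bar{G}$, we obtain $\exp(\gamma_2(G))\mid\exp(\bar{G}\wedge\bar{G})$, and combining this with the bound of the first paragraph gives $\exp(\gamma_2(G))\mid\exp(G/Z(G))$.

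The step I expect to be most delicate is the construction of the epimorphism onto the \emph{full} $\gamma_2(G)$: the naive commutator homomorphism lands only in $\gamma_2(\bar{G})=\gamma_2(G)Z(G)/Z(G)$, which loses the central part $\gamma_2(G)\cap Z(G)$ of $\gamma_2(G)$, whereas the whole point is to recover $\exp(\gamma_2(G))$ on the nose. The free-presentation computation is the way I would sidestep the bookkeeping of the crossed-pairing axioms, isolating the single fact that does the work, namely $[F,\bar{R}]\le R$; once this is established, Theorem~\ref{th:2.5} supplies everything that remains.
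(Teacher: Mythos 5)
Your proposal is correct, but it takes a genuinely different route from the paper. The paper proves Theorem~\ref{th:2.6} by direct commutator calculus: writing $\exp(G/Z(G))=p^n$, it applies Theorem~\ref{th:2.1}~$(i)$ together with \cite[Theorem 13]{LCKM} to get $(\gamma_2(G))^{p^n}\le [G^p,G]^{p^{n-1}}$, and then kills $[G^p,G]^{p^{n-1}}$ by a further sequence of power--commutator manipulations, invoking Proposition~\ref{P:2.2} and Corollary~\ref{cr:2.3} applied to $G/G^{p^n}$. You instead deduce the theorem from Theorem~\ref{th:2.5}: the central quotient $\bar{G}=G/Z(G)$ is metabelian of class at most $2p-2$, so $\exp(\bar{G}\wedge\bar{G})\mid\exp(\bar{G})$, and the epimorphism $\bar{G}\wedge\bar{G}\twoheadrightarrow\gamma_2(G)$ (which you justify correctly, both via the crossed-pairing relations and via the free-presentation identification $\bar{G}\wedge\bar{G}\cong\gamma_2(F)/[F,\bar{R}]$ with the key inclusion $[F,\bar{R}]\le R$ coming from $[G,Z(G)]=1$) transfers the bound to $\gamma_2(G)$. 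This is precisely one direction of the equivalence the paper itself states in the introduction when deriving Corollary~\ref{cr:3.2} from Theorem~\ref{th:3.1}, and there is no circularity, since the paper's proof of Theorem~\ref{th:2.5} nowhere uses Theorem~\ref{th:2.6}. Your route is shorter and more conceptual, and it even yields a marginally stronger statement: since passing to $G/Z(G)$ drops the class by one, your argument covers class up to $2p$, not just $2p-1$. What the paper's computational proof buys in exchange is independence from the exterior-square machinery and an intermediate fact of some separate interest, namely $[G^p,G]^{p^{n-1}}=1$ under these hypotheses.
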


\begin{proof}
Let $\exp(G/Z(G))=p^n$. If $n=1$, then we have $(\gamma_2(G))^p=1$ by \cite[Theorem 13]{LCKM}. Now we assume $n\ge 2$. Using Theorem \ref{th:2.1} $(i)$, we have $(\gamma_2(G))^{p^n}\le [G^{p^n}, G] (\gamma_{p+1}(G))^{p^{n-1}}$. Applying \cite[Theorem 13]{LCKM} to $G/[G^p, G]$, we obtain $\gamma_{p+1}(G)\le [G^p, G]$. Thus we get $(\gamma_2(G))^{p^n}\le [G^p, G]^{p^{n-1}}$. Moreover, 
\begin{equation}\label{eq:2.6.1}
[G^p, G]^{p^{n-1}}\le [(G^p)^{p^{n-1}}, G] [G,\ _{p}\ G^p]^{p^{n-2}}
\end{equation}
by Theorem \ref{th:2.1} $(i)$. Applying Corollary \ref{cr:2.3} to $\frac{G}{G^{p^n}}$, we get $(G^p)^{p^{n-1}}\le G^{p^n}$. Hence $[(G^p)^{p^{n-1}}, G]=1$. Set $L=[G,\ _{p-1}\ G^p]$. By \eqref{eq:2.6.1}, $[G^p, G]^{p^{n-1}}\le [L, G^p]^{p^{n-2}}$. Noting that $[L,\ _p\ G]\le \gamma_{2p}(G)$, Theorem \ref{th:2.1} $(i)$ yields $[L, G^p]=[L, G]^p$. As $G$ is metabelian, we have $([L, G]^p)^{p^{n-2}}=[L, G]^{p^{n-1}}= [(L)^{p^{n-1}}, G]$. Thus we get $[G^p, G]^{p^{n-1}}\le [L^{p^{n-1}}, G]$. Applying Proposition \ref{P:2.2} $(i)$ to $\frac{G}{G^{p^n}}$, we obtain $(\gamma_2(G^p))^{p^{n-1}}\le G^{p^n}$. As $L\le \gamma_2(G^p)$, we get $L^{p^{n-1}}\le G^{p^n}$, yielding that $[L^{p^{n-1}}, G]=1$. Hence $[G^p, G]^{p^{n-1}}=1$, as required.
\end{proof}

\section{Bounds depending on the nilpotency class}
A classical theorem of Schur states that if the central quotient $G/Z(G)$ is finite, then $\gamma_2(G)$ is finite. In \cite{APTIsrael}, the authors prove that if $G$ is a finite $p$-group of class at most $p+1$, then $\exp(\gamma_2(G))\mid \exp(G/Z(G))$, which can be regarded as an analogue of Schur's theorem for the exponent of the group instead of the order. In this section, we give a bound for $\exp(\gamma_2(G))$ for $p$-groups of a given class. In particular, we prove the following theorem:

\begin{theorem}\label{th:3.1}
Let $p$ be a prime and $G$ be a $p$-group. If the nilpotency class of $G$ is $c$, then $\exp(\gamma_{2}(G))\mid p^{\ceil{\log_pc}-1}\exp(G/Z(G))$. 
\end{theorem}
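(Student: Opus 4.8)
Set $\exp(G/Z(G))=p^m$ and $n=\ceil{\log_p c}$; the assertion is precisely that $\gamma_2(G)^{p^{m+n-1}}=1$, so the plan is to produce this one power bound directly. The hypothesis gives $G^{p^m}\le Z(G)$, hence $[G^{p^m},G]=1$, and this triviality is the engine. Feeding it into the commutator-power identity of Theorem \ref{th:2.1}$(ii)$ with $l=1$ and exponent $p^m$, I get $[G^{p^m},G]\equiv\gamma_2(G)^{p^m}$ modulo $\prod_{r=1}^{m}\gamma_{p^r+1}(G)^{p^{m-r}}$; since the left-hand side is trivial,
\[
\gamma_2(G)^{p^m}\ \le\ \prod_{r=1}^{m}\gamma_{p^r+1}(G)^{p^{m-r}}\ \le\ \gamma_{p+1}(G).
\]
This is the base rung of a ladder, and the whole point of the integers $p^r+1$ produced by the identity is that they grow geometrically in $r$.

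The plan is then to climb the geometric tower $\gamma_2(G)\ge\gamma_{p+1}(G)\ge\gamma_{p^2+1}(G)\ge\cdots\ge\gamma_{p^n+1}(G)=1$, the final equality holding because $p^n\ge c$ forces $\gamma_{p^n+1}(G)=1$. Concretely I would establish, for each $k$ with $1\le k\le n-1$, a jump $\gamma_{p^k+1}(G)^{p}\le\gamma_{p^{k+1}+1}(G)$ costing a single factor of $p$ per rung. Telescoping the base relation with these jumps yields
\[
\gamma_2(G)^{p^m}\le\gamma_{p+1}(G),\qquad \gamma_2(G)^{p^{m+1}}\le\gamma_{p^2+1}(G),\ \ldots,\ \gamma_2(G)^{p^{m+n-1}}\le\gamma_{p^n+1}(G)=1,
\]
which is exactly the asserted bound: the base rung spends the full $p^m$ coming from $\exp(G/Z(G))$, while the $n-1$ higher rungs spend one $p$ each, producing the factor $p^{n-1}$. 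The base case $c\le p$ (where $n=1$ and nothing needs to be climbed) is the class-$(p+1)$ theorem of \cite{APTIsrael}.

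The main obstacle is proving the single-$p$ geometric jump $\gamma_{p^k+1}(G)^{p}\le\gamma_{p^{k+1}+1}(G)$ for $k\ge1$. A direct appeal to Theorem \ref{th:2.1} is not enough: applied to $\gamma_{p^k+1}(G)^{p^m}$ it only returns the arithmetic increments $\gamma_{p^k+p^r}(G)$, whose smallest index $p^k+p$ falls well short of $p^{k+1}+1$, and it over-spends the exponent. I would instead set the jump up as an induction on the class: pass to a section of strictly smaller class, for instance $G/\gamma_{p^{n-1}+1}(G)$, which has class at most $p^{n-1}$ so that $\ceil{\log_p(\cdot)}$ drops by one, apply the inductive hypothesis there, and absorb the kernel $\gamma_{p^{n-1}+1}(G)$ using the high-weight instances of the commutator-power identity together with the vanishing $\gamma_{p^n+1}(G)=1$ forced by the class bound. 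The delicate point throughout is exponent bookkeeping: one must show that each geometric doubling of the weight can be bought for a single power of $p$ rather than a full factor $p^m$, since a naive iteration of the base relation would instead give the far weaker bound $p^{nm}$. Ensuring that the geometric indices $p^r+1$, and not the additive ones, govern the recursion is where the argument has to be most careful.
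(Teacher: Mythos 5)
Your base step is sound, and it is in fact exactly the $i=1$ instance of the argument the paper itself uses: from $G^{p^m}\le Z(G)$ and Theorem \ref{th:2.1} one gets $\gamma_2(G)^{p^m}\le\prod_{r=1}^{m}\gamma_{p^r+1}(G)^{p^{m-r}}\le\gamma_{p+1}(G)$. The fatal problem is the ladder: the jump $\gamma_{p^k+1}(G)^{p}\le\gamma_{p^{k+1}+1}(G)$ is not a hard-to-prove lemma, it is false. Take $G$ dihedral of order $32$, generated by a rotation $r$ of order $16$ and a reflection (the theorem allows $p=2$). Its lower central series is $\gamma_i(G)=\langle r^{2^{i-1}}\rangle$ for $i\ge 2$, so $\gamma_3(G)\cong\mathbb{Z}_4$ and $\gamma_3(G)^2=\langle r^8\rangle\ne 1$, while $G$ has class $4$, so $\gamma_5(G)=1$; thus $\gamma_3(G)^2\not\le\gamma_5(G)$. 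Note this $G$ satisfies the theorem's hypotheses (here $m=3$, $c=4$, $n=2$, and the theorem's conclusion does hold for it), so the failure is internal to your strategy, not an edge case outside it. Indeed your jump, applied when the class is less than $p^{k+1}+1$, would force $\exp(\gamma_{p^k+1}(G))\mid p$ with no reference to $\exp(G/Z(G))$ whatsoever, and nothing of that kind can be true. Your fallback plan does not repair this: passing to $G/\gamma_{p^{n-1}+1}(G)$ and invoking induction on class gives $\gamma_2(G)^{p^{m+n-2}}\le\gamma_{p^{n-1}+1}(G)$, but to finish within budget you would again need $\exp(\gamma_{p^{n-1}+1}(G))\mid p$ (false: the dihedral example has $\gamma_3$ of exponent $4$); the correct bound on that kernel is $p^{m}$, and spending it yields only $\exp(\gamma_2(G))\mid p^{2m+n-2}$, far weaker than claimed.

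The paper avoids this trap by never telescoping subgroup inclusions at all. It proves, by \emph{reverse} induction on the weight $i$ (from $i=c-1$ down to $i=1$), the simultaneous family of exponent bounds $\exp(\gamma_{i+1}(G))\mid p^{m+\ceil{\log_p(c/i)}-1}$ (in your notation, with $p^m=\exp(G/Z(G))$). For each $i$ the engine is the one you used, but applied at weight $i$: Theorem \ref{th:2.1}$(i)$ with $N=\gamma_i(G)$, $M=G$, where the main term $[\gamma_i(G)^{p^{m+\ceil{\log_p(c/i)}-1}},G]$ vanishes because $\gamma_i(G)^{p^{m}}\le G^{p^{m}}\le Z(G)$, and each correction term $\gamma_{ip^r+1}(G)^{p^{m+\ceil{\log_p(c/i)}-1-r}}$ vanishes because the reverse-induction hypothesis at the larger weight $ip^r$ gives exactly $\exp(\gamma_{ip^r+1}(G))\mid p^{m+\ceil{\log_p(c/(ip^r))}-1}=p^{m+\ceil{\log_p(c/i)}-1-r}$. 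So the genuine ``one $p$ per geometric step'' phenomenon is a statement about how the exponent bounds decay as the weight is multiplied by $p$ --- each such bound still carries the full factor $p^{m}$ --- and not an inclusion between powers of the terms of the lower central series. That reformulation is the missing idea.
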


\begin{proof}
Let $\exp(G/Z(G))=p^n$. We show that 

\begin{equation}\label{eq:3.1.1}
\exp(\gamma_{i+1}(G))\mid p^{n+\ceil{\log_p(\frac{c}{i})}-1}
\end{equation}

 for all $1\le i\le c-1$. If $c\le p+1$, then \eqref{eq:3.1.1} follows by \cite[Theorem 2.4]{APTIsrael}, so we assume $c\ge p+2$. Let $m=\ceil{\frac{c}{p}}$. Noting that $\gamma_{pm+1}(G)\le \gamma_{c+1}(G)$, using Theorem \ref{th:2.1} $(i)$, we get $(\gamma_{m+1}(G))^{p^n}=[(\gamma_m(G)^{p^n}), G]=1$. Thus \eqref{eq:3.1.1} holds for all $m\le i\le c-1$. Now we proceed to prove \eqref{eq:3.1.1} by reverse induction on $i$. Let $1\le i\le m-1$, noting that  $\gamma_{ip^{\ceil{\log_p(\frac{c}{i})}}+1}(G)\le \gamma_{c+1}(G)$, we have $\prod_{r=\ceil{\log_p(\frac{c}{i})}}^{n+\ceil{\log_p(\frac{c}{i})}-1} (\gamma_{ip^r+1}(G))^{p^{n+\ceil{\log_p(\frac{c}{i})}-1-r}} =1$. Thus using Theorem \ref{th:2.1} $(i)$ yields 

 \begin{equation*}
 (\gamma_{i+1}(G))^{p^{n+\ceil{\log_p(\frac{c}{i})}-1}}\le [(\gamma_i(G))^{p^{n+\ceil{\log_p(\frac{c}{i})}}-1}, G] \prod_{r=1}^{\ceil{\log_p(\frac{c}{i})}-1} (\gamma_{ip^r+1}(G))^{p^{n+\ceil{\log_p(\frac{c}{i})}-1-r}}.
 \end{equation*}

 By induction hypothesis, $\exp(\gamma_{ip^r+1}(G))\mid p^{n+\ceil{\log_p\bigg(\dfrac{c}{ip^r}\bigg)}-1}=p^{n+\ceil{\log_p(\frac{c}{i})}-1-r}$ for all $1\le r\le \ceil{\log_p(\frac{c}{i})}-1$. Now noting that $[(\gamma_i(G))^{p^{n+\ceil{\log_p(\frac{c}{i})}-1}}, G]\le [(\gamma_i(G))^{p^n}, G]=1$, we get \eqref{eq:3.1.1} for $i$. In particular, by taking $i=1$, we have $\exp(\gamma_{2}(G))\mid p^{n+\ceil{\log_pc}-1}$.
\end{proof}

In \cite[Theorem 2.4]{APTIsrael} the authors proved that $\exp(\gamma_2(G))\mid \exp(G/Z(G))$ for finite $p$-group $G$ of class at most $p+1$, where as the above theorem gives that $\exp(\gamma_2(G))\mid \exp(G/Z(G))$ if the class of $G$ is at most $p$, and $\exp(\gamma_2(G))\mid p\exp(G/Z(G))$ if the class of $G$ is $p+1$. Let $p$ be an odd prime and $G$ be a nilpotent group of class $c$. Sambonet \cite[Theorem 1.1]{NS2} proved that $\exp(G\wedge G)\mid (\exp(G))^{\floor{\log_{p-1}c}+1}$. The authors \cite[Theorem 4.2]{APTIsrael} proved that $\exp(G\wedge G)\mid (\exp(G))^{\ceil{\log_{p-1}(\frac{c+1}{p+1})}+1}$, if $c\ge p$. The authors \cite[Theorem 1.4]{BMGM} proved that $\exp(G\wedge G)\mid (\exp(G))^{\ceil{\log_p(c+1)}}$. As a consequence of Theorem \ref{th:3.1}, we obtain the following result, which improves the bounds given by \cite[Theorem 1.4]{BMGM}, and consequently the bounds given by \cite[Theorem 4.2]{APTIsrael}, and \cite[Theorem 1.1]{NS2}.

\begin{corollary}\label{cr:3.2}
Let $p$ be an odd prime and $G$ be a finite $p$-group. If the nilpotency class of $G$ is $c$, then $\exp(G\wedge G)\mid p^{n-1}\exp(G)$, for $n=\ceil{\log_p(c+1)}$. In particular, $\exp(H_2(G, \mathbb{Z}))\mid p^{n-1}\exp(G)$.
\end{corollary}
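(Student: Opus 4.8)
The plan is to deduce this from Theorem~\ref{th:3.1} by passing to a covering group, exactly along the lines of the equivalence recorded in the introduction. Fix a covering group $H$ of $G$, so that there is a central stem extension $1\to M\to H\to G\to 1$ with $M\cong H_2(G,\mathbb{Z})$ and $M\le Z(H)\cap\gamma_2(H)$; recall that $G\wedge G\cong\gamma_2(H)$ and that, by Miller, $H_2(G,\mathbb{Z})$ sits inside $G\wedge G$ as a normal subgroup. Thus it suffices to bound $\exp(\gamma_2(H))$, after which the bound on $\exp(H_2(G,\mathbb{Z}))$ is immediate, since the exponent of a subgroup divides that of the ambient group.

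First I would control the nilpotency class of $H$. Since $M\le Z(H)$ and $H/M\cong G$ has class $c$, the lift of $\gamma_{c+1}(H/M)=1$ gives $\gamma_{c+1}(H)\le M\le Z(H)$, whence $\gamma_{c+2}(H)=[\gamma_{c+1}(H),H]=1$. Hence $\mathrm{cl}(H)\le c+1$, and therefore $\ceil{\log_p(\mathrm{cl}(H))}\le\ceil{\log_p(c+1)}=n$.

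Next I would relate $\exp(H/Z(H))$ to $\exp(G)$. Because $M\le Z(H)$, the quotient $H/Z(H)$ is isomorphic to $G/(Z(H)/M)$, a homomorphic image of $G$, so $\exp(H/Z(H))\mid\exp(G)$. Applying Theorem~\ref{th:3.1} to $H$ then yields
\[
\exp(G\wedge G)=\exp(\gamma_2(H))\mid p^{\ceil{\log_p(\mathrm{cl}(H))}-1}\exp(H/Z(H))\mid p^{n-1}\exp(G),
\]
and the ``in particular'' clause follows at once from Miller's embedding.

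The deduction is short; the only points needing care are the two structural facts about the cover, namely the class estimate $\mathrm{cl}(H)\le c+1$ and the divisibility $\exp(H/Z(H))\mid\exp(G)$, both of which rest solely on the defining property $M\le Z(H)\cap\gamma_2(H)$ of a stem cover. I expect no genuine obstacle here: the entire substance of the corollary is already contained in Theorem~\ref{th:3.1}, and this argument merely transports that bound across the isomorphism $G\wedge G\cong\gamma_2(H)$.
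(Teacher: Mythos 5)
Your proposal is correct and is essentially the paper's own argument: the paper derives Corollary~\ref{cr:3.2} from Theorem~\ref{th:3.1} via exactly this covering-group equivalence (noted in the introduction), using $G\wedge G\cong\gamma_2(H)$, the class bound $\mathrm{cl}(H)\le c+1$, and $\exp(H/Z(H))\mid\exp(G)$ for a stem cover $H$. You have merely made explicit the routine verifications that the paper leaves implicit, and all of them are sound.
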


\begin{corollary}\label{cr:3.3}
Let $p$ be an odd prime and $G$ be a finite $p$-group. If the nilpotency class of $G$ is less than or equal to $p^2-1$, then $\exp(G\wedge G)\mid p\exp(G)$. In particular, $\exp(H_2(G, \mathbb{Z}))\mid p\exp(G)$.
\end{corollary}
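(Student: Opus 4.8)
The plan is to read this off directly from Corollary \ref{cr:3.2}, which already bounds $\exp(G\wedge G)$ in terms of $n=\ceil{\log_p(c+1)}$ for any finite $p$-group of class $c$. The entire content of the present corollary is the observation that the hypothesis $c\le p^2-1$ forces $n$ to be small, so the power of $p$ appearing in Corollary \ref{cr:3.2} collapses to a single factor of $p$.

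First I would estimate $n$. From $c\le p^2-1$ we get $c+1\le p^2$, hence $\log_p(c+1)\le 2$ and therefore $n=\ceil{\log_p(c+1)}\le 2$. (One also has $n\ge 1$, since $c\ge 1$ gives $c+1\ge 2$, so $\log_p(c+1)>0$; this is only needed to see that $n-1\ge 0$ and the expression $p^{n-1}$ makes sense as written.) Next I would simply apply Corollary \ref{cr:3.2}, which yields $\exp(G\wedge G)\mid p^{\,n-1}\exp(G)$. Since $n\le 2$ we have $n-1\le 1$, so $p^{\,n-1}\mid p$, and consequently $\exp(G\wedge G)\mid p\exp(G)$, which is the assertion.

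For the ``in particular'' clause I would invoke Miller's theorem, recalled in the introduction, that $H_2(G,\mathbb{Z})$ is a normal subgroup of $G\wedge G$. The exponent of a subgroup divides the exponent of the ambient group, so $\exp(H_2(G,\mathbb{Z}))\mid\exp(G\wedge G)$, and combining this with the bound just obtained gives $\exp(H_2(G,\mathbb{Z}))\mid p\exp(G)$.

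I do not anticipate any genuine obstacle here: the statement is a specialization of Corollary \ref{cr:3.2} to the range of classes for which the bounding exponent $n-1$ drops to $1$, and the substantive work has already been carried out in Theorem \ref{th:3.1} and its corollary. The only point requiring a moment's care is the ceiling estimate $\ceil{\log_p(p^2)}=2$ and checking that it is attained only up to the boundary $c=p^2-1$, which is exactly why $p^2-1$ is the correct threshold.
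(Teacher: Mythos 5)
Your proposal is correct and matches the paper's (implicit) argument exactly: Corollary \ref{cr:3.3} is intended as an immediate specialization of Corollary \ref{cr:3.2}, using $c\le p^2-1 \Rightarrow \ceil{\log_p(c+1)}\le 2$ so that $p^{n-1}\mid p$, with the ``in particular'' clause following from Miller's result that $H_2(G,\mathbb{Z})$ embeds in $G\wedge G$.
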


For an odd order group of nilpotency class at most $8$, we have the following bound for the exponent of the Schur multiplier.

\begin{corollary}\label{cr:3.4}
Let $p$ be an odd prime and let $G$ be a $p$-group. If the nilpotency class of $G$ is less than or equal to $8$, then $\exp(G\wedge G) \mid\ p\exp(G)$. In particular, $\exp(H_2(G, \mathbb{Z})) \mid\ p\exp(G)$.
\end{corollary}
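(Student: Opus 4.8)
The plan is to obtain this as an immediate specialization of Corollary \ref{cr:3.3}. That corollary already gives $\exp(G \wedge G) \mid p\exp(G)$ for any finite $p$-group whose nilpotency class does not exceed $p^2 - 1$, so the entire task reduces to verifying that the hypothesis of the present corollary---class at most $8$---always lies within that range, uniformly across all odd primes.

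First I would record the elementary numerical fact that $p^2 - 1 \ge 8$ for every odd prime $p$; equivalently $p^2 \ge 9$, which holds precisely because the smallest odd prime is $p = 3$, where $p^2 - 1 = 8$ exactly. For every larger odd prime the quantity $p^2 - 1$ only grows, so the threshold is never violated. The case $p = 3$ is thus the tight one, and Corollary \ref{cr:3.3} specialized to $p = 3$ already coincides with the present statement.

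Consequently, if $G$ is a $p$-group of class at most $8$, then its class is at most $p^2 - 1$, and Corollary \ref{cr:3.3} applies verbatim to give $\exp(G \wedge G) \mid p\exp(G)$. The ``in particular'' clause then follows because $H_2(G, \mathbb{Z})$ sits as a (normal) subgroup of $G \wedge G$, as recalled in the introduction, so its exponent divides that of $G \wedge G$.

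There is no genuine obstacle here: all the content is already carried by Corollary \ref{cr:3.3}, and the only step is the trivial observation that $8 \le p^2 - 1$ for every odd $p$. The value of isolating this statement is merely that it packages the conclusion under a single, prime-independent class bound of $8$.
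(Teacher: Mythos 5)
Your proposal is correct and is exactly the derivation the paper intends: Corollary \ref{cr:3.4} is stated immediately after Corollary \ref{cr:3.3} precisely because $8 = 3^2 - 1 \le p^2 - 1$ for every odd prime $p$, so the class-at-most-$8$ hypothesis always falls within the range of Corollary \ref{cr:3.3}, with $p = 3$ the tight case. The ``in particular'' clause follows, as you say, from Miller's result that $H_2(G,\mathbb{Z})$ embeds as a normal subgroup of $G \wedge G$.
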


\begin{theorem}\label{th:3.5}
Let $G$ be a finite $p$-group and $S$ be a Sylow $p$-subgroup of $\operatorname{Aut}(G)$ with $\exp(S)=q$.
\begin{itemize} 
\item [$(i)$] If $G$ is metabelian $p$-group of class at most $2p-1$, then $\exp(G)\mid pq^3$.
\item [$(ii)$] If the nilpotency class of $G$ is $c$, then $\exp(G)\mid p^{\ceil{\log_pc}}q^3$.
\end{itemize}
\end{theorem}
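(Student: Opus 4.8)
The plan is to combine three ingredients: the inner automorphisms, which already sit inside every Sylow $p$-subgroup; a carefully chosen family of central automorphisms, which detect the part of the centre surviving in the abelianisation; and the Schur-type bounds on $\exp(\gamma_2(G))$ furnished by Theorems \ref{th:2.6} and \ref{th:3.1}. Throughout write $A=G/\gamma_2(G)$ for the abelianisation and $\bar Z=Z(G)\gamma_2(G)/\gamma_2(G)\le A$ for the image of the centre in $A$. First, since $\operatorname{Inn}(G)\cong G/Z(G)$ is a normal $p$-subgroup of $\operatorname{Aut}(G)$, it lies in every Sylow $p$-subgroup, so $\exp(G/Z(G))\mid \exp(S)=q$. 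Two elementary facts about exponents in central and abelian extensions then reduce the whole problem to the centre: $\exp(G)\mid \exp(A)\,\exp(\gamma_2(G))$, and $\exp(A)\mid \exp(\bar Z)\,\exp(A/\bar Z)$, where $A/\bar Z\cong G/(Z(G)\gamma_2(G))$ is a quotient of $G/Z(G)$, so $\exp(A/\bar Z)\mid q$. Hence $\exp(A)\mid \exp(\bar Z)\,q$, and everything hinges on the estimate $\exp(\bar Z)\mid pq$ together with the known bounds on $\exp(\gamma_2(G))$.

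The heart of the proof is the estimate $\exp(\bar Z)\mid pq$, which I would obtain by building suitable central automorphisms. Put $p^u=\exp(\bar Z)$ and choose $z_0\in Z(G)$ whose image $\bar z_0\in A$ has order $p^u$; set $p^{u'}=\operatorname{ord}(z_0)$, so $u'\ge u$. Using Pontryagin duality for the finite abelian $p$-group $A$, pick a homomorphism $\lambda\colon A\to \mathbb Z/p^{u'-1}$ with $\lambda(\bar z_0)$ of maximal possible order $p^{\min(u,\,u'-1)}$, and define $f\colon G\to Z(G)$ by $f(g)=z_0^{\,p\lambda(\bar g)}$ and $\alpha_f(g)=g\,f(g)$. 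Since $f$ is a homomorphism with central image contained in $Z(G)^p\subseteq \Phi(G)$, the map $\alpha_f$ is an endomorphism that is the identity modulo the Frattini subgroup; hence $\alpha_f\in\operatorname{Aut}(G)$ and $\alpha_f$ is a $p$-element, so $\operatorname{ord}(\alpha_f)\mid q$.

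To bound $\operatorname{ord}(\alpha_f)$ from below I would compute the iterates on the generator $z_0$ itself: a short induction gives $\alpha_f^{\,k}(z_0)=z_0^{\,(1+p\lambda(\bar z_0))^{k}}$, so $\operatorname{ord}(\alpha_f)$ is at least the multiplicative order of $1+p\lambda(\bar z_0)$ modulo $p^{u'}$. The standard lifting-the-exponent computation for odd $p$ then shows this order is $p^{u'-1}$ when $\lambda(\bar z_0)$ may be taken to be a unit (the case $u'\le u+1$) and $p^{u}$ when $u'\ge u+2$; in either case it is at least $p^{u-1}$. Therefore $p^{u-1}\mid \operatorname{ord}(\alpha_f)\mid q$, that is, $\exp(\bar Z)=p^{u}\mid pq$, as required. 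Consequently $\exp(A)\mid \exp(\bar Z)\,q\mid pq^{2}$.

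Finally I would feed in the Schur-type bounds. For $(i)$, Theorem \ref{th:2.6} gives $\exp(\gamma_2(G))\mid \exp(G/Z(G))\mid q$, whence $\exp(G)\mid \exp(A)\,\exp(\gamma_2(G))\mid pq^{2}\cdot q=pq^{3}$. For $(ii)$, Theorem \ref{th:3.1} gives $\exp(\gamma_2(G))\mid p^{\ceil{\log_p c}-1}\exp(G/Z(G))\mid p^{\ceil{\log_p c}-1}q$, whence $\exp(G)\mid pq^{2}\cdot p^{\ceil{\log_p c}-1}q=p^{\ceil{\log_p c}}q^{3}$. The main obstacle, and the only genuinely delicate step, is the central-automorphism estimate of the second and third paragraphs: one must simultaneously guarantee that the constructed map is a bona fide $p$-automorphism (achieved by pushing its image into $Z(G)^p\subseteq\Phi(G)$) and control its order despite the possible gap $u'>u$ between $\operatorname{ord}(z_0)$ and the order of $\bar z_0$. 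The odd-$p$ order computation is precisely what forces the single unavoidable extra factor of $p$, as already visible for $G=\mathbb Z/p$, where $q=1$ but $\exp(G)=p$. For $p=2$ the same scheme runs with $1+p$ replaced by $1+4$, at the cost of absorbing a bounded power of $2$.
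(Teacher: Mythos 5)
The skeleton of your reduction is sound and close to the paper's: $\operatorname{Inn}(G)$ is a normal $p$-subgroup of $\operatorname{Aut}(G)$, so $\exp(G/Z(G))\mid q$; the Schur-type Theorems \ref{th:2.6} and \ref{th:3.1} bound $\exp(\gamma_2(G))$; and one further ingredient must control the part of $G$ seen by the centre. The paper obtains that ingredient by invoking the proof of \cite[Theorem 150A]{BJ4}, which yields a \emph{dichotomy}: either $\exp(Z(G))\mid q$ (then $\exp(G)\mid\exp(Z(G))\exp(G/Z(G))\mid q^2$) or $\exp(G/\gamma_2(G))\mid pq^2$ (then $\exp(G)\mid pq^2\cdot\exp(\gamma_2(G))$). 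You instead try to prove unconditionally that $\exp(\bar Z)\mid pq$, hence that $\exp(G/\gamma_2(G))\mid pq^2$ always; this is strictly stronger than either horn of the dichotomy, and it is exactly here that your argument has a genuine gap.

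The gap is the sentence invoking Pontryagin duality. Duality produces a character $\chi\colon A\to\mathbb{Q}/\mathbb{Z}$ with $\chi(\bar z_0)$ of order $p^u$, but the target there is divisible; it does \emph{not} produce a homomorphism $\lambda\colon A\to\mathbb{Z}/p^{u'-1}$ with $\lambda(\bar z_0)$ of order $p^{\min(u,\,u'-1)}$. The obstruction is divisibility of $\bar z_0$ inside $A$: if $\bar z_0\in p^hA$, then $\lambda(\bar z_0)\in p^h\bigl(\mathbb{Z}/p^{u'-1}\bigr)$ for \emph{every} homomorphism $\lambda$, so $\operatorname{ord}\bigl(\lambda(\bar z_0)\bigr)\le p^{u'-1-h}$ regardless of $u$. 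This really happens: for the modular group $G=\langle x,y\mid x^{p^3}=y^p=1,\ [x,y]=x^{p^2}\rangle$ of order $p^4$ one has $\gamma_2(G)=\langle x^{p^2}\rangle$, $A\cong\mathbb{Z}/p^2\times\mathbb{Z}/p$, $Z(G)=\langle x^p\rangle$, and every admissible choice $z_0=x^{kp}$ ($p\nmid k$) has $u=1$, $u'=2$ and $\bar z_0\in pA$, so every $\lambda\colon A\to\mathbb{Z}/p^{u'-1}=\mathbb{Z}/p$ kills $\bar z_0$: the homomorphism you require does not exist. (In this example $u=1$, so your final inequality is vacuously true, but it shows the construction itself is unsound, and your proposal contains no argument excluding the same obstruction when $u\ge2$, which is precisely when the inequality has content.) Since the lower bound $p^{u-1}\mid\operatorname{ord}(\alpha_f)$ rests entirely on $\lambda(\bar z_0)$ having order at least $p^{u-1}$, the key estimate $\exp(\bar Z)\mid pq$ --- and with it both parts of the theorem --- is unproven. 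This divisibility phenomenon is the very reason the paper, following Berkovich--Janko, settles for a two-case conclusion rather than a single clean inequality. A smaller issue: for $p=2$ you concede a further bounded power of $2$, which does not recover the stated bound $2^{\ceil{\log_2c}}q^3$ in part $(ii)$, where $p=2$ is allowed.
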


\begin{proof}
\begin{itemize}
\item [$(i)$] Noting that $\exp(G/Z(G))\mid q$, we obtain $\exp(\gamma_2(G))\mid q$ by Theorem \ref{th:2.6}. Following the proof of \cite[Theorem 150A]{{BJ4}} , we get either $\exp(Z(G))\mid q$ or $\exp(G/\gamma_2(G))\mid pq^2$. Thus either $\exp(Z(G))\mid q$ giving $\exp(G)\mid q^2$, or $\exp(G/\gamma_2(G))\mid pq^2$ giving $\exp(G)\mid pq^3$. Hence we get $\exp(G)\mid pq^3$ in either case.

\item [$(ii)$] Using Theorem \ref{th:2.6} in place of Theorem \ref{th:3.1} in the proof of $(i)$, we obtain $(ii)$.
\end{itemize}
\end{proof}

Since $\exp(G)= \prod\limits_{p\mid |G|} \exp(S_p)$ for a finite group $G$, where $S_p$ is a Sylow $p$-subgroup of $G$, applying Theorem \ref{th:3.5} to Sylow $p$-subgroups of $G$, we obtain the following theorem:

\begin{theorem}\label{th:3.6} 
Let $G$ be a finite group of odd order and $p_1, \ldots, p_k$ be the set of primes dividing order of $G$. Let $P_i$ be a Sylow $p_i$-subgroup of $G$ and $S_i$ be a $p_i$-Sylow subgroup of $\operatorname{Aut}(P_i)$ with $\exp(S_i)=q_i$, $i=1, \ldots, k$. 
\begin{itemize} 
\item [$(i)$] If $P_i$ is metabelian $p_i$-group of class at most $2p_i-1$ for all $i=1, \ldots k$, then $\exp(G)\mid \prod_{i=1}^{k}p_iq_i^3$.
\item [$(ii)$] If the class of $P_i$ is $c_i$ for all $1\le i\le k$, then $\exp(G)\mid \prod_{i=1}^{k}p_i^{\ceil{\log_{p_i}c_i}}q_i^3$.
\end{itemize}
\end{theorem}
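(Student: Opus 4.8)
The plan is to reduce Theorem~\ref{th:3.6} to Theorem~\ref{th:3.5} applied separately to each Sylow subgroup of $G$, glued together by the multiplicativity of the exponent over primes. The first step I would take is to record the identity $\exp(G) = \prod_{i=1}^{k} \exp(P_i)$. This holds for any finite group: $\exp(G)$ is the least common multiple of the orders of its elements, so its $p_i$-part is the largest power of $p_i$ occurring as the order of a $p_i$-element. Every $p_i$-element lies in some Sylow $p_i$-subgroup, which by Sylow's theorem is conjugate to $P_i$ and hence has the same exponent, so this largest power is exactly $\exp(P_i)$. Since the $P_i$ involve pairwise distinct primes, taking the product of the $\exp(P_i)$ reassembles $\exp(G)$. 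I would state this factorization for a \emph{general} finite group, since the only structural input is conjugacy of Sylow subgroups, not solvability.

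Next I would note that, because $|G|$ is odd, every prime $p_i$ dividing $|G|$ is odd, so each $P_i$ is a finite $p_i$-group with $p_i$ an odd prime. This is precisely the standing hypothesis under which Theorem~\ref{th:3.5} applies, now with $P_i$ playing the role of $G$, with $\operatorname{Aut}(P_i)$ in place of $\operatorname{Aut}(G)$, with $S_i$ in place of $S$, and with $q_i = \exp(S_i)$. For part $(i)$, assuming each $P_i$ is metabelian of class at most $2p_i - 1$, Theorem~\ref{th:3.5}$(i)$ gives $\exp(P_i) \mid p_i q_i^3$ for every $i$; multiplying over $i$ and invoking the factorization yields $\exp(G) = \prod_{i=1}^{k}\exp(P_i) \mid \prod_{i=1}^{k} p_i q_i^3$. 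For part $(ii)$, assuming each $P_i$ has class $c_i$, Theorem~\ref{th:3.5}$(ii)$ gives $\exp(P_i) \mid p_i^{\ceil{\log_{p_i}c_i}} q_i^3$, and the identical multiplication produces $\exp(G) \mid \prod_{i=1}^{k} p_i^{\ceil{\log_{p_i}c_i}} q_i^3$.

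The argument is short once the exponent factorization is in hand, so I do not expect a deep obstacle; the points requiring care are entirely bookkeeping. The main one is ensuring that the odd-order hypothesis on $G$ legitimately transfers the odd-prime requirement that Theorem~\ref{th:3.5} inherits from Theorem~\ref{th:2.6} (and the cited bound on $\exp(G/\gamma_2(G))$) down to each individual $P_i$; since oddness of $|G|$ forces every $p_i$ to be odd, this is automatic. The only other subtlety is to apply Theorem~\ref{th:3.5} to $\operatorname{Aut}(P_i)$ rather than to $\operatorname{Aut}(G)$, so that no relationship between $\operatorname{Aut}(P_i)$ and $\operatorname{Aut}(G)$ need be established — the hypotheses of Theorem~\ref{th:3.6} are already phrased in terms of the $S_i \le \operatorname{Aut}(P_i)$, making the application direct.
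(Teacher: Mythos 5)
Your proposal is correct and takes essentially the same route as the paper: the paper's (one-line) proof likewise combines the factorization $\exp(G)=\prod_{p\mid |G|}\exp(S_p)$ over Sylow subgroups of a finite group with an application of Theorem~\ref{th:3.5} to each $P_i$. Your additional bookkeeping --- justifying the factorization via conjugacy of Sylow subgroups, and noting that oddness of $|G|$ makes every $p_i$ odd so that Theorem~\ref{th:3.5} applies --- only makes explicit what the paper leaves implicit.
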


\section*{Acknowledgements} V. Z. Thomas acknowledges research support from SERB, DST, Government of India grant MTR/2020/000483.

\bibliographystyle{elsarticle-num}
\bibliography{Bibliography}
\end{document}